\definecolor{doom}{rgb}{0,0,0.68}
\newtheorem{theorem}{Theorem}
\newaliascnt{example}{theorem}
\newtheorem{example}[example]{Example}
\newtheorem*{remark}{Remark}
\newtheorem*{question}{Question}
\newcommand{\axiom}[1]{\mathsf{#1}} 
\newcommand{\ZFC}{\axiom{ZFC}}
\newcommand{\DC}{\axiom{DC}}
\newcommand{\ZF}{\axiom{ZF}}
\newcommand{\Ord}{\axiom{Ord}}
\newcommand{\HOD}{\axiom{HOD}}
\DeclareMathOperator{\cf}{cf}
\DeclareMathOperator{\dom}{dom}
\newcommand{\proves}{\mathrel{\vdash}}
\address{\textbf{Einstein Institute of Mathematics}\\
Edmond J. Safra Campus, Givat Ram\\
The Hebrew University of Jerusalem.\\
Jerusalem, 91904, Israel}
\author{Asaf Karagila}
\email{karagila@math.huji.ac.il}
\urladdr{http://boolesrings.org/asafk}
\date{\today}
\subjclass[2010]{Primary 03E25; Secondary 03E99}
\keywords{Absoluteness, L\'{e}vy hierarchy, axiom of choice, ordinal bounded quantifiers}
\title{Absolutely Choiceless Proofs}
\begin{document}
\begin{abstract}
We study a well-known technique of using absoluteness for giving choice-free proofs to some statements which are known to be provable with the axiom of choice. The idea is to reduce the problem to an inner model where the axiom of choice holds and use absoluteness. We examine the complexity of the sentences that this technique can be applied to, and show that many of the theorems in basic partition calculus have the adequate complexity for this technique to apply.
\end{abstract}
\maketitle
\section{Introduction}
Recall the L\'{e}vy hierarchy of formulas in the language of set theory. We say that a quantifier $Q$ is bounded if it is of the form $(Qx\in y)$. We define $\Sigma_0$ formula to be a formula equivalent to one in which all the quantifiers are bounded, $\Pi_0$ is a negation of $\Sigma_0$ and $\Delta_0$ are statements which are both $\Pi_0$ and $\Sigma_0$. Note that all three classes defined here are the same.

If $\Sigma_n,\Pi_n,\Delta_n$ were defined, then we define $\Sigma_{n+1}$ to be formulas equivalent to $\exists x\varphi(x)$, where $\varphi$ is a $\Pi_n$ formula; $\Pi_{n+1}$ is a formula equivalent to a negation of $\Sigma_{n+1}$; and $\Delta_{n+1}$ is a formula which is both $\Sigma_{n+1}$ and $\Pi_{n+1}$.

To read more about these one should consult, for example, \cite[Chapter~13]{Jech:ST2003}. And we will assume that the reader is familiar with the basic statements about this hierarchy.

One of the classical theorems is that if $M\subseteq V$ are two models of $\ZF$ with the same ordinals (and $M$ is transitive in $V$), then every $\Sigma_1$ statement true in $M$ is true in $V$ (in which case we say that the statement is \textbf{upwards absolute}), and every $\Pi_1$ statement true in $V$ is true in $M$ (in which case we say that the statement is \textbf{downwards absolute}). It follows that $\Delta_1$ statements are absolute between two models of $\ZF$ with the same ordinals. Moreover, if we assume that $V\models\ZF+\DC$ then every $\Pi_1$ sentence is absolute between $V$ and $L$, and therefore $\Pi_1$ sentences are absolute between all models of $\ZF+\DC$.

But absoluteness can also be used to prove statements without the axiom of choice in the following manner.

\begin{example}[Erd\H{o}s-Dushnik-Miller Theorem] Assuming that in $\ZFC$ it is provable that $\kappa\to(\kappa,\omega)^2$ for every uncountable $\kappa$, then it is provable in $\ZF$.
\end{example}
\begin{proof}
Suppose that $V\models\ZF$, and $\kappa$ is uncountable in $V$. Let $c$ be a coloring of $[\kappa]^2$ in $V$, and consider $L[c]$ which is a model of $\ZFC$. Since $c\subseteq L$ we have $c\in L[c]$.

By the fact that $\kappa$ is uncountable in $V$ it is certainly uncountable in $L[c]$ and so there is a subset $X\subseteq\kappa$ witnessing the truth of the Erd\H{o}s-Dushnik-Miller Theorem, and so $X\in V$ and it is a witness for the wanted homogeneity. 
\end{proof}

Of course that is an indirect result, rather than directly constructing the sets we use the fact that this is true in a definable inner model. And in \cite{MathOverflow40507} Andr\'es Caicedo asks whether or not a direct proof can be given of a consequence of the above theorem (which can be proved in a similar fashion). The answer is yes, and an argument due to Clinton Conley and is given on the same page.

Before we proceed we give another striking example with a seemingly very different nature.

\begin{example}[Magidor] If $\cf(\omega_1)=\cf(\omega_2)=\omega$ then $0^\#$ exists.
\end{example}
\begin{proof}
Let $\kappa=\omega_1^V$. Let $A\subseteq\kappa$ be a cofinal sequence of length $\omega$. Then in $L[A]$, $\kappa$ is singular. Let $\alpha$ be $(\kappa^+)^{L[A]}$, then in $V$ we have that $\cf(\alpha)=\omega$ as well. Let $B\subseteq\alpha$ be a cofinal sequence of order type $\omega$.

Now in $L[A,B]$ we have that both $\kappa$ and $\alpha$ are singular, and therefore the successor of $\kappa$ in $L[A,B]$ is some $\beta<\alpha$. If so, $L[A,B]$ knows that $L[A]$ miscalculates the successor of a singular cardinal, $\kappa$, and therefore it knows that $0^\#$ exists, and so $V$ knows that as well.
\end{proof}

\section{The General Theorem}

\begin{question}[Raghavan]
To what sort of level in the L\'{e}vy hierarchy can we use the above trick, and show that if a statement is provable from $\ZFC$ then it is provable from $\ZF$?\footnote{December 11th, 2013. Math Department lounge in Jerusalem. The question came up when we were having coffee and talking about the Erd\H{o}s-Dushnik-Miller Theorem in a choiceless context.}
\end{question}

It is trivial that every $\Sigma_1$ statement satisfies that criteria. If $\varphi$ is a $\Sigma_1$ statement, and $V$ satisfies $\ZF$, then $L\subseteq V$ satisfying $\ZFC$, and therefore $L\models\varphi$. By absoluteness $V\models\varphi$, and so we get that $\ZF$ proves $\varphi$ as well.

But one can notice that in order to use the trick of reducing back to $L[A]$ we needed $A$ to be a set of ordinals, or constructibly coded into a set of ordinals. We introduce the following definition of an \textbf{ordinal bounded quantifier}, $(\exists^\Ord x)\varphi(x)$ is the statement $\exists x(x\subseteq\Ord^{<\omega}\land\varphi(x))$. That is to say that $x$ is a set of tuples of ordinals, and satisfies $\varphi$. We similarly define $\forall^\Ord$. 

The statement $x\subseteq\Ord^{<\omega}$ is a $\Delta_0$ statement (with $x$ as a parameter saying that all the elements of $x$ are functions whose domain is a finite ordinal, and whose range is a set of ordinals), therefore changing existential and universal quantifiers to ordinal bounded quantifiers does not increase (nor decreases) the complexity of the formula in the L\'{e}vy hierarchy.

\begin{remark}
We can require only that $x$ is a subset of $L$ in order to immediately have that $x\in L[x]$, or even that $x$ is a subset of a model of $\ZFC$, such as $\HOD$. However writing that $x\subseteq L$ will increase the complexity of the formula, whereas just requiring that $x$ is a set of tuples of ordinals does not.
\end{remark}

\begin{theorem}
If $\varphi$ is of the form $(\forall^\Ord x)\psi(x)$, where $\psi(x)$ is upwards absolute for models of $\ZF$ with the same ordinals, then $\ZF\proves\varphi$ if and only if $\ZFC\proves\varphi$.
\end{theorem}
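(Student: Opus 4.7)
The forward direction ($\ZF \proves \varphi$ implies $\ZFC \proves \varphi$) is immediate since $\ZFC$ extends $\ZF$, so the content lies in the converse. The plan is to run the $L[A]$-style argument sketched in the two opening examples, but at the full generality allowed by an ordinal bounded universal quantifier.

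Assume $\ZFC \proves \varphi$, and fix an arbitrary model $V \models \ZF$. To show $V \models \varphi$, I pick an arbitrary $x \in V$ with $V \models x \subseteq \Ord^{<\omega}$; the goal is to prove $V \models \psi(x)$. The key point is that a set of finite tuples of ordinals is, up to a simple $\Delta_0$ coding (e.g.\ G\"odel pairing iterated along the length of each tuple), a set of ordinals, so the inner model $L[x]$ is defined inside $V$ and, by the classical theorem that $L[A]$ satisfies $\ZFC$ whenever $A$ is a set of ordinals, we have $L[x] \models \ZFC$. Moreover $x \in L[x]$, and the $\Delta_0$ statement ``$x \subseteq \Ord^{<\omega}$'' is absolute, so $L[x]$ agrees that $x$ is a legitimate witness of the ordinal bounded quantifier. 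Since $\ZFC \proves \varphi = (\forall^\Ord x)\psi(x)$, instantiating the quantifier at $x$ inside $L[x]$ gives $L[x] \models \psi(x)$.

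Now $V$ and $L[x]$ are both models of $\ZF$ with the same ordinals (since $L[x]$ is an inner model of $V$), and $\psi$ is assumed to be upwards absolute between such models, so $L[x] \models \psi(x)$ transfers up to $V \models \psi(x)$. Because $x$ was an arbitrary element of $V$ satisfying $x \subseteq \Ord^{<\omega}$, this yields $V \models (\forall^\Ord x)\psi(x)$, i.e.\ $V \models \varphi$, completing the argument.

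There is no serious obstacle, but the step that needs to be stated carefully is the coding of subsets of $\Ord^{<\omega}$ as sets of ordinals; this is exactly the reason for phrasing the definition of $\forall^\Ord$ in terms of tuples of ordinals rather than arbitrary sets, since otherwise $L[x]$ might fail to satisfy $\AC$ (for a general $x$ one only gets an inner model of the form $L[\mathrm{tc}(\{x\})]$, which need not be well-orderable). The remark preceding the theorem already notes the trade-off between complexity and generality here, and the proof makes essential use of the specific form of the quantifier.
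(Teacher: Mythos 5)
Your proposal is correct and follows essentially the same route as the paper: assume $\ZFC\proves\varphi$, take an arbitrary $x\subseteq\Ord^{<\omega}$ in a model $V$ of $\ZF$, pass to $L[x]\models\ZFC$ (which contains $x$ since $x\subseteq L$), instantiate the quantifier there, and pull $\psi(x)$ back up to $V$ by upwards absoluteness. The only difference is cosmetic: where you invoke a G\"odel-pairing coding of tuples, the paper simply observes that $x\subseteq\Ord^{<\omega}$ already gives $x\subseteq L$ and hence $x\in L[x]$.
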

\begin{proof}
Clearly every statement provable from $\ZF$ is provable from $\ZFC$. 

Suppose that $\varphi$ is as above and $\ZFC\proves\varphi$. Let $V$ be a model of $\ZF$, and let $a\subseteq\Ord^{<\omega}$ in $V$. Consider $L[a]$, which is a model of $\ZFC$. We have $a\subseteq L$, so $a\in L[a]$. By the assumption, $L[a]\models(\forall^\Ord x)\psi(x)$, in particular for $L[a]\models\psi(a)$, and since $\psi$ is absolute we have $V\models\psi(a)$. Therefore $V\models\varphi$, so $\ZF\proves\varphi$ as wanted.
\end{proof}

Some examples for statements satisfying the requirements of the theorem are $\Pi_1$ statements whose universal quantifiers are ordinal bounded, and in fact $\Pi_2$ statements whose outer universal quantifiers are ordinal bounded. This givens a partial improvement over the absoluteness of $\Pi_1$ sentences, since no assumption of $\DC$ is needed now. The following example reveals the limitation of the theorem when it comes to existential quantifiers.

\begin{example}
The statement ``The real numbers can be well-ordered'' is a $\Sigma_2$ sentence with all its quantifiers ordinal bounded quantifiers, which is provable from $\ZFC$ but not from $\ZF$.
\end{example}
\begin{proof}
Cohen proved the consistency of $\ZF$ with the failure of this statement (see \cite{Cohen:1964}, as well \cite[Theorem~14.36]{Jech:ST2003}). And it is true in $\ZFC$, since every set can be well-ordered. We calculate the complexity of the statement to see that it is indeed $\Sigma_2$.

The real numbers can be well-ordered if and only if there is an ordinal $\alpha$ and a function $f\colon\alpha\to 2$ such that for every $a\colon\omega\to2$ there is some $\beta<\alpha$ such that for all $n$, $f(\beta+n)=a(n)$.

\begin{align*}
&(\exists^\Ord\alpha)(\exists^\Ord f)(\forall^\Ord x)\\
&\Bigg(\alpha\text{ is an ordinal}\land (f\colon\alpha\to 2)\land\Big((x\colon\omega\to 2)\rightarrow(\exists\beta\in\alpha)(\forall n\in\omega)x(n)=f(\beta+n)\Big)\Bigg).
\end{align*}

The statements that $\alpha$ is an ordinal and $f$ is a function are both $\Delta_0$, generally the quantification on $\omega$ and the ordinal arithmetic are $\Delta_1$, therefore the entire expression inside the quantifiers is $\Delta_1$ with parameters $\alpha,f$ and $x$. So the entire statment is $\Sigma_2$ as wanted.
\end{proof}

\section{Analysis of the Examples}
While it is clear how the first example falls into the absoluteness theorem above, it is unclear how Magidor's theorem works out here. After all, $\ZFC$ always proves that both $\omega_1$ and $\omega_2$ are regular. But a careful analysis of the proof shows that in fact we can write the following statement: For every two sets of ordinals $A$ and $B$, if the following holds,
\begin{enumerate}
\item In $L[A]$ all the elements of $A$ are cardinals ($\Pi_1$ statement), and $\sup A$ is singular ($\Sigma_1$ statement).
\item In $L[A]$ $\sup B=(\sup A)^+$ ($\Delta_2$ statement).
\item In $L[A,B]$ there is an injection from $\sup B$ into $\sup A$ ($\Sigma_1$ statement).
\end{enumerate}
Then $0^\#$ exists ($\Sigma_2$ statement). 

Since being an element of $L[A]$ is $\Delta_1$ with $A$ as a parameter, the above is a $\Delta_2$ statement with parameters $A$ and $B$, and the existence of $0^\#$ is a $\Sigma_2$ statement, moreover every quantifier which increases the complexity beyond $\Delta_1$ is ordinal bounded (in fact the only place were we use non-ordinal bounded quantifiers is in limiting our quantified objects to $L[A]$ or $L[A,B]$ which is $\Delta_1$). So as a whole we have an implication between two $\Sigma_2$ statements, ``There exists $A$ and $B$ such that some $\Delta_2$ property in $A$ and $B$'' then ``$0^\#$ exists''. The whole implication can be written as a $\Pi_3$ statement, whose universal quantifiers are ordinal bounded, and the internal $\Sigma_2$ statement is upwards absolute. Therefore $\ZF$ proves the implication as well, and Magidor's proof is given by showing that from assuming that $\omega_1$ and $\omega_2$ are singular, there are witnesses that the implication is not vacuous.

We finish with the proof that essentially all the basic results of infinitary combinatorics about partition calculus which are provable in $\ZFC$ are provable in $\ZF$.

\begin{theorem}
Every statement of the form ``For every coloring of a subset of $[\kappa]^{<\omega}$ in $\lambda$ colors, there is a subset of order type/cardinality $\alpha$ which is homogeneous/anti-homogeneous'' is $\Pi_2$ with all its quantifier being ordinal bounded quantifiers (and with $\kappa,\lambda$ as parameters, and possibly $n$ in $[\kappa]^n$).
\end{theorem}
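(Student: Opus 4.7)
The plan is to write the partition statement as a prenex formula whose only unbounded quantifiers are ordinal bounded. First I would encode a coloring $c \colon [\kappa]^{<\omega} \to \lambda$, or on a subset of $[\kappa]^{<\omega}$, as its graph, which is a subset of $\Ord^{<\omega}$; quantifying over all such colorings becomes $(\forall^\Ord c)$, and the condition that $c$ is a legitimate coloring is $\Delta_0$ with $\kappa$ and $\lambda$ (and $n$, if the arity of the tuples is fixed) as parameters.

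Next I would quantify over the homogeneous or anti-homogeneous witness by quantifying over an injection $f\colon\alpha\to\kappa$ rather than over its image $X = \mathrm{ran}(f)$. The graph of $f$ is again a set of ordinal pairs, so this becomes $(\exists^\Ord f)$. Quantifying over $f$ rather than $X$ lets us encode the order type or cardinality requirement internally: for order type $\alpha$, demand that $f$ be order preserving; for cardinality $\alpha$, that $f$ be a bijection onto its range; both are $\Delta_0$ conditions. The homogeneity or anti-homogeneity clause then says that for all appropriate tuples $s,t \in [\mathrm{ran}(f)]^{<\omega}$ we have $c(s) = c(t)$, or $c(s) \neq c(t)$, which is $\Delta_0$ in $c,f,\kappa$ since the tuples range over a set.

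Putting everything together, the statement takes the form
\[
(\forall^\Ord c)\bigl[c \text{ is a coloring} \to (\exists^\Ord f)\bigl(f \text{ witnesses the conclusion}\bigr)\bigr],
\]
whose matrix is $\Sigma_1$ with only an ordinal bounded existential, so the whole sentence is $\Pi_2$ with all quantifiers ordinal bounded. The main delicate point is expressing ``order type $\alpha$'' and ``cardinality $\alpha$'' without smuggling in non-ordinal bounded quantifiers, which is exactly what the shift from quantifying over the witness set $X$ to quantifying over the witnessing injection $f$ accomplishes.
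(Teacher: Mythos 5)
Your proposal is correct and follows essentially the same route as the paper: encode the coloring as a set of tuples of ordinals under a $\forall^\Ord$ quantifier, existentially quantify (with $\exists^\Ord$) over ordinal-coded witnesses for the homogeneous set together with the order-type or cardinality requirement, and observe that everything else is $\Delta_0$, yielding a $\Pi_2$ form. The only cosmetic difference is that you fold the witness set and the order-preserving bijection into a single injection $f$, where the paper uses two consecutive $\exists^\Ord$ quantifiers for the set $A$ and the bijection $g$; this does not change the complexity.
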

\begin{proof}
The coloring is a function from strictly increasing finite functions to $\kappa$ into $\lambda$, so quantifying ``every coloring'' is a $\forall^\Ord f$ and the fact that $f$ is a coloring is $\Delta_0$ (with parameters $\kappa,\lambda$ and $n$ if the functions have a fixed or bounded length).

Next we say that there exists a set of ordinals $A\subseteq\kappa$, and there exists a bijection or an order preserving bijection from $A$ to $\alpha$, or the relevant ordinal appearing in the theorem, that would be two consecutive $\exists^\Ord$ quantifiers, and a $\Delta_0$ formula.

Finally we say that $A$ is homogeneous which is to say that every finite function in $\dom f$ whose range includes only elements from $A$ is mapped to a single element. Or that $A$ is anti-homogeneous if such $f$ is injective. Both of these properties are $\Delta_0$. 

And so we get that the statement is of the form \[(\forall^\Ord f)(\exists^\Ord A)(\exists^\Ord g)\varphi(f,A,g,\kappa,\alpha,\lambda,n),\] where $\varphi$ is a $\Delta_0$ statement in which we write out the above facts.
\end{proof}

\section{Acknowledgments}
The author would like to thank D.\ Raghavan for inspiring the idea of a general framework of these absoluteness results, and to U.\ Abraham for his help in finding the elegant $\Sigma_2$ statement in Example~4, and to Haim Horowitz and Yair Hayut for their help in correcting a mistake in the author's interpretation of that $\Sigma_2$ statement in a previous version of this manuscript.

\bibliographystyle{amsalpha}
\providecommand{\bysame}{\leavevmode\hbox to3em{\hrulefill}\thinspace}
\providecommand{\MR}{\relax\ifhmode\unskip\space\fi MR }
\providecommand{\MRhref}[2]{%
  \href{http://www.ams.org/mathscinet-getitem?mr=#1}{#2}
}
\providecommand{\href}[2]{#2}

\end{document}